\theoremstyle{definition}
\newtheorem{theorem}{Theorem}[section]
\newtheorem{proposition}[theorem]{Proposition}
\newtheorem{lemma}[theorem]{Lemma}
\newtheorem{remark}[theorem]{Remark}
\newtheorem{corollary}[theorem]{Corollary}
\title{The space of tempered distributions as a k-space}
\author{Kei Harada \\
\begin{small}
Graduate School of Mathematics, Nagoya University
\end{small} \\
Hayato Saigo \\
\begin{small}
Research Institute for Mathematical Sciences, Kyoto University
\end{small}
}
\date{}
\begin{document}

\maketitle
\begin{abstract}
In this paper, we investigate the roles of compact sets in 
the space of tempered distributions $\mathscr{S}^{\prime}$.
The key notion is ``k-spaces'', which constitute a fairly general class of topological spaces. In a k-space, 
the system of compact sets controls continuous functions and Borel measures. 

Focusing on the k-space structure of $\mathscr{S}^{\prime}$, we prove some theorems 
which seem to be fundamental 
for infinite dimensional harmonic analysis from a new and unified view point. For example, 
the invariance principle of Donsker for the white noise measure 
is shown in terms of infinite dimansional characteristic functions.  
\end{abstract}
Keywords: k-space; Prohorov's conditions ; Donsker invariance principle. \\
AMS Subject Classification: 60H40, 60E10, 54D30, 54D50. 

\section{Introduction}
Compactness is one of the most important 
notions in mathematics. While
compact sets are usually defined in terms of open sets, this does not mean that compactness is less fundamental than openness. 
To see this, let us take some examples from measure theory.

It is well known that there exist shift invariant measures, 
or Haar measures, on any locally compact group. 
A. Weil \cite{We} showed that the existence of 
shift invariant measure is, in a sense, 
equivalent to locally compactness. 
This means that the locally compactness is fundamental for harmonic analysis. 

The trouble is that most infinite dimensional spaces are not locally compact, but it is not the end of the story. 
Infinite dimensional calculus such as White Noise Analysis (\cite{HID})
makes use of the theory of Radon measures.
In this theory, the notion of inner regularity plays crucial roles. 
A Radon measure is a locally finite inner regular measure, 
and a finitely additive regular measure on a Radon space is
$\sigma $-additive if and only if it is inner regular 
(see \cite{ADA} for details). 
As inner regular measures are defined in terms of compact sets, 
it is natural to suppose consider that compactness is the most fundamental 
notion for measure theory. 
In fact, the theory of Radon measures can be defined by 
the notion of ``compactology'' introduced by Weil. 
More concretely, compactology defines a Radon measure as a system of mutually compatible measures associated 
to the system of compact sets (see e.g. \cite{SCH1}). 

On the other hand, there is a counterpart for compactology 
in the context of general topology, 
that is, the notion of k-space (compactly generated space) 
\cite{KEL}. 
A topological space $X$ is called a k-space if and only if any mapping  continuous on every compact subset is continuous, or equivalently, a subset $F$ in $X$ is closed if and only if $K \cap F$ is compact for any compact $K \subset X$. 
This means the topology is controlled by the system of compact sets. 
From the viewpoint of category theory, the category of Hausdorff k-spaces (CGHaus) has many good features (\cite{Mac}). 
For example, it admits the canonical structure for exponential objects (``function spaces''). 
Moreover, this category is natural to study and generalize Gelfand-type duality in analysis (\cite{DUB}, \cite{NEG}). 
Maclane even said that the category of Hausdorff k-spaces is ``right'', while 
the category of all topological spaces is ``wrong'' (\cite{Mac}).

Not all spaces, but most of practical spaces (as domain spaces) are k-spaces. 
It is known that
any locally compact space or any first-countable space (in particular Polish space) is 
a k-space. From the viewpoint of infinite dimensional analysis, 
it is natural to ask whether each space of distributions is k or not. 

In the present paper, we focus on the fact that the space of tempered distributions is a k-space 
and apply it to prove some notable theorems. 

\section{k-properties in the spaces of distributions}
The purpose of this section is to prove Theorem $\ref{252}$. 

\begin{lemma}
Let $H_1$, $H_2$ be Hilbert spaces, and let $i : H_1 \to H_2$ be a continuous linear map. If $F$ is a closed bounded convex subset of $H_1$, then $i(F)$ is closed in $H_2$. 
\end{lemma}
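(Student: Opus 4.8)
The plan is to pass to the weak topologies, where boundedness plus closedness upgrades to compactness, and a continuous linear map becomes weakly continuous. Recall that a Hilbert space is reflexive. Since $F$ is convex and norm-closed, it is weakly closed (this is Mazur's theorem, an application of Hahn--Banach separation). Since $F$ is bounded and $H_1$ is reflexive, $F$ is relatively weakly compact (Banach--Alaoglu together with reflexivity, or the Eberlein--\v{S}mulian theorem). Combining these, $F$ is weakly compact in $H_1$.

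Next I would observe that $i \colon H_1 \to H_2$, being continuous and linear, is continuous for the weak topologies: for every $\varphi \in H_2^{*}$ the composite $\varphi \circ i$ lies in $H_1^{*}$, so the $i$-preimage of every weak-subbasic open set of $H_2$ is weakly open in $H_1$. Hence $i$ maps the weakly compact set $F$ to a weakly compact subset $i(F)$ of $H_2$.

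Finally, a weakly compact subset of the Hausdorff space $(H_2,\mathrm{weak})$ is weakly closed, and a weakly closed set is a fortiori norm-closed because the norm topology is finer than the weak topology. Therefore $i(F)$ is closed in $H_2$, as claimed.

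There is essentially no serious obstacle here; the only point requiring care is the justification that $F$ is weakly compact, which is exactly where reflexivity of Hilbert space is used and cannot be dropped (the statement fails for general Banach spaces). If one prefers an elementary, self-contained argument avoiding the phrase ``weakly compact'', the same idea runs sequentially: given $i(x_n) \to y$ in $H_2$ with $x_n \in F$, the sequence $(x_n)$ is bounded, hence (passing to the separable closed subspace it generates and using Banach--Alaoglu) has a subsequence $x_{n_k} \rightharpoonup x$; Mazur's theorem gives $x \in F$; continuity of $i$ yields $i(x_{n_k}) \rightharpoonup i(x)$, while $i(x_{n_k}) \to y$ strongly, so $y = i(x) \in i(F)$ by uniqueness of weak limits.
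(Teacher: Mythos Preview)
Your proof is correct and essentially matches the paper's. The paper argues sequentially, exactly as in your final paragraph (take $y\in\overline{i(F)}$, pick $x_n\in F$ with $i(x_n)\to y$, extract a weakly convergent subsequence, use convexity/Mazur to place the weak limit in $F$, and conclude $y=i(x_0)$); your leading weak-compactness argument is simply the non-sequential repackaging of the same idea.
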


\begin{proof}
Let $y \in \overline{i(F)}$, then there exists a sequence $\{x_n\}$ in $F$ such that $i(x_n) \to y$.
Since $F$ is bounded in $H_1$, there exists a weakly convergent subsequence $x_{n_k} \to x_0$. 
Now, because the map $x \mapsto \langle i(x), z \rangle _{H_2}$ belongs to $H_1^{\ast}$ for all $z \in H_2$, we obtain 
\[
\langle i(x_0), z \rangle _{H_2} 
= \lim _{k \to \infty} \langle i(x_{n_k}), z \rangle _{H_2} 
= \langle y, z \rangle _{H_2},
\]
which shows $i(x_0) = y$. 
As $F$ is closed and convex, $x_0 \in F$, and hence we have $y \in i(F)$.    
\end{proof}

\begin{theorem}\label{252}
The space of tempered distributions $\mathscr{S}^{\prime}$ equipped with the strong topology is a k-space. 
\end{theorem}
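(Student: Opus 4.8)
The plan is to exploit the classical realisation of $\mathscr{S}'$ as a countable inductive limit of Hilbert spaces, using the Lemma for the one step that is not purely formal. Fix the standard chain of Hilbert spaces $(H_n)_{n\in\mathbb{Z}}$ attached to $\mathscr{S}$ (for instance the Sobolev-type spaces built from the Hermite operator), with $H_0=L^2$, $H_{-n}=(H_n)'$, and continuous dense inclusions $H_{n+1}\hookrightarrow H_n$ that are compact, so that $\mathscr{S}=\bigcap_{n\geq0}H_n$ as a Fréchet space and, by duality, $\mathscr{S}'=\bigcup_{n\geq0}H_{-n}$, each inclusion $H_{-n}\hookrightarrow H_{-(n+1)}$ again being compact. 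Two standard facts about this chain will be used: the inclusions $H_{-n}\hookrightarrow\mathscr{S}'$ are continuous, and the strong topology of $\mathscr{S}'$ is exactly the (regular) locally convex inductive limit topology of $(H_{-n})_n$; consequently a subset $A\subseteq\mathscr{S}'$ is strongly closed if and only if $A\cap H_{-n}$ is closed in $H_{-n}$ for every $n$. (This is classical; see e.g.\ \cite{HID} and the references therein.)

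The Lemma will be applied to turn closed balls of the spaces $H_{-n}$ into compact subsets of $\mathscr{S}'$. Let $D_n$ be the closed unit ball of $H_{-n}$. Applying the Lemma to the continuous linear inclusion $i\colon H_{-n}\hookrightarrow H_{-(n+1)}$ and to the closed bounded convex set $D_n$, we get that $i(D_n)=D_n$ is closed in $H_{-(n+1)}$; since $i$ is moreover a compact operator, $D_n$ is relatively compact in $H_{-(n+1)}$; being both closed and relatively compact, $D_n$ is compact in $H_{-(n+1)}$, and hence — the inclusion $H_{-(n+1)}\hookrightarrow\mathscr{S}'$ being continuous — compact in $\mathscr{S}'$. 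Thus $rD_n$ is a compact subset of $\mathscr{S}'$ for every $r>0$ and $n\geq0$.

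With this in hand the k-space property is checked directly. Let $A\subseteq\mathscr{S}'$ satisfy that $A\cap K$ is compact for every compact $K\subseteq\mathscr{S}'$; by the structural fact above it suffices to fix $n$ and show that $A\cap H_{-n}$ is closed in the Hilbert space $H_{-n}$. Let $x$ lie in the $H_{-n}$-closure of $A\cap H_{-n}$. Since $H_{-n}$ is metrizable, choose a sequence $(a_j)$ in $A\cap H_{-n}$ with $a_j\to x$ in $H_{-n}$; then $\|a_j\|_{H_{-n}}\leq r$ for some $r$ and all $j$, hence also $\|x\|_{H_{-n}}\leq r$, so that $a_j,x\in rD_n=:K$, which is compact in $\mathscr{S}'$ by the previous step. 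As the inclusion $H_{-n}\hookrightarrow\mathscr{S}'$ is continuous, $a_j\to x$ in $\mathscr{S}'$ as well, and $a_j\in A\cap K$ for every $j$. But $A\cap K$ is compact, hence closed in the Hausdorff space $\mathscr{S}'$, so $x\in A\cap K\subseteq A$; combined with $x\in H_{-n}$ this gives $x\in A\cap H_{-n}$. Therefore $A\cap H_{-n}$ is closed in $H_{-n}$, and since $n$ was arbitrary, $A$ is closed in $\mathscr{S}'$. Hence $\mathscr{S}'$ is a k-space.

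The only genuine obstacle is the input quoted in the first paragraph: that the strong topology on $\mathscr{S}'$ is the regular inductive limit topology of the Hilbert chain, so that closedness may be tested one step at a time. This is precisely where the specifically ``Schwartz'' nature of $\mathscr{S}$ — the compactness of the linking maps — is being used, and it is the point one should either quote carefully or establish separately. Everything after it is elementary, the Lemma supplying exactly the small fact that the balls $D_n$ are compact in $\mathscr{S}'$ (note that mere relative compactness would not suffice), which is what allows a sequence converging in $H_{-n}$ to be captured inside a single compact subset of $\mathscr{S}'$.
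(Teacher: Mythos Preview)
Your argument is internally consistent, but the step you flag as the ``only genuine obstacle'' is in fact the entire content of the theorem, and your ``consequently'' is not valid. Knowing that the strong topology on $\mathscr{S}'$ coincides with the locally convex inductive limit of the $H_{-n}$ does \emph{not} imply that a set is closed once its trace on each $H_{-n}$ is closed: the locally convex inductive limit topology is in general strictly coarser than the final topology in $\mathbf{Top}$, so level-by-level testing of closed sets is a nontrivial extra property (it holds for Silva/(DFS) spaces, but that is a theorem, not a formality, and its proof is exactly the construction you have skipped). Once you assume it, everything that remains---balls $rD_n$ compact in $\mathscr{S}'$ via the Lemma, then a sequence argument in each $H_{-n}$---is indeed routine, but you have assumed the hard part.

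The paper's proof does precisely the work you defer. It first shows, as you do, that $i_n^{-1}(F)$ is closed in each $H_{-n}$. Then, for $x\notin F$, it \emph{builds by hand} an inductive-limit $0$-neighbourhood $O$ with $(x+O)\cap F=\emptyset$: starting from a ball $B_m(r_m)$ around $i_m^{-1}(x)$ disjoint from $i_m^{-1}(F)$, it pushes this ball into $H_{-m-2}$, where by the Lemma its image is closed and by compactness of the linking map it is compact, hence has positive distance from the closed set $i_{m+2}^{-1}(F)$; one can therefore thicken by a new ball $B_{m+2}(r_{m+2})$ and iterate. The union of these nested sums is an absolutely convex absorbing set whose trace on each step is a neighbourhood of $0$, i.e.\ a genuine $0$-neighbourhood in the locally convex inductive limit. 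This explicit construction is where the compactness of the linking maps and the Lemma are actually used, and it is what is missing from your write-up.
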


\begin{proof}
As is well-known, $\mathscr{S}^{\prime}$ is the inductive limit of a sequence of Hilbert spaces $\{H_{-n}\}_{n=1}^{\infty}$. 
Let us denote by $i_n$ the canonical imbedding map $H_{-n} \to \mathscr{S}^{\prime}$.

Let $F \subset \mathscr{S}^{\prime}$ satisfy for all compact $K$, $K \cap F$ is closed.   
If $K_n \subset H_{-n}$ is compact, 
then $i_n(K_n)$ is compact, 
$F \cap i_n(K_n)$ is closed, 
and hence $i_n^{-1}(F) \cap K_n$ is closed. Since $H_{-n}$ is a k-space, we obtain $i_n^{-1} (F)$ is closed. 

Let $x \notin F$, then $x \in i_{m}(H_{-m})$ for some $m \in \mathbb{N}$. Since $x \notin F$ and $H_{-m}$ is a Hilbert space, there exists $r_m>0$ such that $\{i_m^{-1}(x) +B_m(r_m) \} \cap i_m^{-1}(F) = \emptyset$,  where we set $B_m(r_m) = \{y_m : y_m \in H_{-m}, \| y_m \|_{H_{-m}} \leq r_m \}$. 
Let $i_{m, m+2}$ denote the imbedding map of $H_{-m}$ into $H_{-m-2}$. By the previous lemma, we obtain $i_{m, m+2} (B_m(r_m))$
is closed, and by the fact that this imbedding is compact, $i_{m, m+2} (B_m(r_m))$ is compact. Again, since $i_{m, m+2} (B_m(r_m)) \cap i_{m+2}^{-1}(F)= \emptyset$, there exists $r_{m+2}>0$ such that \\
$\{ i_{m+2}^{-1}(x) + i_{m,m+2} (B_m(r_m)) + B_{m+2}(r_{m+2}) \} \cap i_{m+2}^{-1}(F) = \emptyset$, where \\
$B_{m+2}(r_{m+2}) = \{y_{m+2} : y_{m+2} \in H_{-m-2}, \| y_{m+2} \|_{H_{-m-2}} \leq r_{m+2} \}$. 

By the reputation of this process, we obtain $\{r_{m+2k} \}_{k=0}^{\infty}$ 
such that \\
$\{ i_{m+2k}^{-1}(x) + i_{m, m+2k}(B_m(r_m)) + \cdots + B_{m+2k}(r_{m+2k})
\} \cap i_{m+2k}^{-1}(F) = \emptyset$ for all $k$. 
Now, set $O \subset \mathscr{S}^{\prime}$ by 
\[
O = \cup _{k=0} ^{\infty} \{ i_m(B_m(r_m)) + i_{m+2}(B_{m+2}(r_{m+2}))+ \cdots 
i_{m+2k} (B_{m+2k}(r_{m+2k}))\} ,
\]
then by the definition of $\{r_{m+2k}\}$, it follows that $\{x+O \} \cap F= \emptyset$, and by the definition of the inductive limit topology of locally convex spaces, we have $O$ is a neighbourhood of $0$. And hence we obtain $F$ is closed in $\mathscr{S}^{\prime}$. 
\end{proof}

\begin{remark}
In the same way, one can also prove $\mathscr{S}^{\prime}$ is a sequential space. 
\end{remark}

\begin{remark}
This result itself is not newly obtained. Actually, it is shown that every Montel (DF)-space is sequential and hence k. See \cite{KAK}, \cite{WEB} for details. 
\end{remark}

\begin{proposition}
$\mathscr{S}^{\prime}$ equipped with weak topology is not a k-space. 
\end{proposition}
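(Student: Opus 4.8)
The plan is to argue by contradiction: if $\mathscr{S}'$ with the weak topology $\sigma(\mathscr{S}',\mathscr{S})$ were a k-space, it would have exactly the same closed sets as $\mathscr{S}'$ with the strong topology $\beta(\mathscr{S}',\mathscr{S})$, forcing the two topologies to coincide — which is false. The bridge between the two is the following observation, which I would isolate as the technical heart of the argument: every weakly compact subset $K$ of $\mathscr{S}'$ is in fact strongly compact, and the weak and strong topologies induce the same subspace topology on $K$. Granting this, the contradiction is immediate: let $F\subset\mathscr{S}'$ be strongly closed and let $K$ be weakly compact; since $K$ is then strongly compact (hence strongly closed), $F\cap K$ is strongly closed in the strongly compact set $K$, hence strongly compact, hence weakly compact, hence weakly closed. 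So if $(\mathscr{S}',\sigma)$ were a k-space, $F$ itself would be weakly closed; thus $\beta(\mathscr{S}',\mathscr{S})\subseteq\sigma(\mathscr{S}',\mathscr{S})$, and since the reverse inclusion is automatic the two topologies would agree.

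The main obstacle is proving the bridge observation, and I would assemble it from three standard structural facts. First, a subset of $\mathscr{S}'$ is weakly bounded if and only if it is strongly bounded (Mackey's theorem, applicable since $\mathscr{S}$ is reflexive so both topologies are consistent with the dual pairing). Second, $\mathscr{S}'$ with its strong topology is a Montel space, so every strongly bounded set is relatively strongly compact. Third, a continuous bijection from a compact space onto a Hausdorff space is a homeomorphism. Now if $K$ is weakly compact, it is weakly bounded, hence strongly bounded, hence contained in the strongly compact set $C:=\overline{K}^{\,\beta}$; the identity map $(C,\beta)\to(C,\sigma)$ is a continuous bijection from a compact space onto a Hausdorff space, so it is a homeomorphism, and $\beta$ and $\sigma$ therefore induce the same topology on $C$. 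Since $K$ is $\sigma$-closed in $\mathscr{S}'$ it is $\sigma$-closed in $C$, hence $\beta$-closed in $C$, hence strongly compact; and a strongly compact set is trivially weakly compact. (Separability of $\mathscr{S}$ additionally makes these compact sets metrizable, which is reassuring but not needed for this route.)

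It remains to record that $\beta(\mathscr{S}',\mathscr{S})\neq\sigma(\mathscr{S}',\mathscr{S})$, i.e. that the strong topology is strictly finer than the weak one; this is classical and holds because $\mathscr{S}$ is infinite dimensional — it possesses bounded (indeed compact) subsets that are not contained in the absolutely convex hull of any finite set, so the corresponding polars are strong neighbourhoods of $0$ that are not weak neighbourhoods. This completes the contradiction and hence the proof. As an alternative to the k-space formulation, one could instead observe that the same inputs show weakly convergent sequences in $\mathscr{S}'$ converge strongly and that weakly compact sets are metrizable, so a k-space structure on $(\mathscr{S}',\sigma)$ would make it sequential with the same convergent sequences, hence the same closed sets, as the strong topology — but the compact-set argument above is the most direct.
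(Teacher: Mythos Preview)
Your proof is correct and follows essentially the same route as the paper: both argue by contradiction from the fact that weakly and strongly compact subsets of $\mathscr{S}'$ coincide (with the same induced topology), so that a k-space structure on the weak side would force $\sigma(\mathscr{S}',\mathscr{S})=\beta(\mathscr{S}',\mathscr{S})$. Your version is simply more explicit --- you supply the Mackey/Montel justification the paper calls ``well known'' and verify the closed-set coincidence directly rather than invoking Theorem~\ref{252} to identify both topologies as the k-ification of the common compact system --- but the underlying idea is identical.
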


\begin{proof}
Assume the space is a k-space, then the weak topology is the finest topology under which $\mathscr{S}^{\prime}$ has the same compact sets as those in the weak topology. 
However, as is well known, $\mathscr{S}^{\prime}$ has the same compact sets under the weak topology and the strong topology.  
Hence by Theorem $\ref{252}$, these two topology coincides, which is a contradiction. 
\end{proof}

This may endorse that 
the strong topology is ``right'' for $\mathscr{S}^{\prime}$ as a domain space of infinite dimensional harmonic analysis. 
It is also known that $\mathscr{D}$ and $\mathscr{D}^{\prime}$ are not k-spaces (see \cite{SHI}, \cite{SMO}). 
So $\mathscr{D}^{\prime}$ is ``wrong'' compared to $\mathscr{S}^{\prime}$, at least from the viewpoint of 
categorical analysis for Gelfand-type duality (\cite{DUB}, \cite{NEG}).

\section{Prohorov's conditions }
In this section, as an application of the results in the previous section, we discuss Prohorov's theorem on the spaces of distributions. 
A topological space $X$ is said to satisfy Prohorov's condition ($P$) if 
any relatively compact subset of probability Borel measures is uniformly tight. 
We say that $X$ satisfies condition ($P^{\prime}$) if any relatively compact subset of signed Borel measures is uniformly tight (See \cite{DHF}). 
These conditions have much to do with the topological feature of the space. For instance, it is known that any Polish space satisfies condition $(P^{\prime})$. 

The following theorem is taken from \cite[Theorem 5.]{DHF}.
A topological space $X$ is said to be {\em hemicompact} if 
there exists an increasing sequence 
of compact subsets $\{K_n\}$ such that any compact subset $K$ is 
contained in some $K_n$. Such a sequence is said to be fundamental.  

\begin{theorem}
If a Radon space $X$ is a hemicompact k-space, 
then $X$ satisfies condition ($P^{\prime}$). 
\end{theorem}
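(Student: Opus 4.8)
The plan is to exploit the hemicompact $k$-space structure to reduce the tightness problem to a statement about the fundamental sequence $\{K_n\}$, and then to derive a contradiction from failure of uniform tightness by constructing a troublesome measure or function that violates relative compactness. First I would recall the relevant characterization of relative compactness of sets of signed Borel measures on a Radon space: a family $\mathcal{M}$ is relatively compact (in the weak topology dual to $C_b(X)$) if and only if it is bounded in total variation and, for the Prohorov-type direction we need, we want to show uniform tightness, i.e., for every $\varepsilon > 0$ there is a compact $K$ with $|\mu|(X \setminus K) < \varepsilon$ for all $\mu \in \mathcal{M}$. Because $X$ is hemicompact with fundamental sequence $\{K_n\}$, it suffices to show that $\sup_{\mu \in \mathcal{M}} |\mu|(X \setminus K_n) \to 0$ as $n \to \infty$; any compact set is contained in some $K_n$, so no finer choice of compacta can help.

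Next I would argue by contradiction: suppose $\mathcal{M}$ is relatively compact but not uniformly tight. Then there is $\varepsilon_0 > 0$ and, passing along the fundamental sequence, measures $\mu_n \in \mathcal{M}$ with $|\mu_n|(X \setminus K_n) \geq \varepsilon_0$ for all $n$. Since $X \setminus K_n$ is open and $X$ is Radon, I can find compact sets $L_n \subset X \setminus K_n$ with $|\mu_n|(L_n) \geq \varepsilon_0 / 2$, and then (by the Hahn decomposition and inner regularity again) a compact $L_n'\subset L_n$ on which $\mu_n$ has constant sign with $|\mu_n(L_n')| \geq \varepsilon_0/4$. The sets $L_n'$ are pairwise "escaping" in the sense that each $L_n'$ misses $K_n$, and since every compact subset of $X$ lies in some $K_N$, the sequence $\{L_n'\}$ is "locally finite on compacta": any compact $K$ meets only finitely many $L_n'$. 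This is where hemicompactness does its real work.

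The key step, and the one I expect to be the main obstacle, is to convert this escaping sequence of compact sets into an explicit element of $C_b(X)$ (or a sequence of them) that detects non-convergence, thereby contradicting relative compactness of $\mathcal{M}$. The idea is: using that $X$ is a $k$-space together with hemicompactness, build a continuous function $f : X \to [-1,1]$ that, on each $L_n'$, equals a sign chosen to match $\mu_n$ on $L_n'$ and is small on $K_{n-1}$. Concretely, on each $K_n$ one defines $f$ piecewise, and continuity of the resulting $f$ follows because a function is continuous on a $k$-space as soon as it is continuous on every compact set — and every compact set is swallowed by some $K_n$, on which $f$ has been arranged to be continuous (using normality of the compact Hausdorff sets $K_n$ and Tietze/Urysohn to interpolate between the prescribed values on $L_n' \cap K_n$ and the small values near the "old" part $K_{n-1}$). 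One must be careful that the finitely many constraints inside each $K_n$ are compatible and that the pieces glue; disjointness of the relevant compacta and a careful bookkeeping of the indices handles this. Granting such an $f$, one then evaluates: the net (or sequence) $\mu_n$ has a subnet converging weakly to some signed Radon measure $\mu$, so $\int f \, d\mu_n$ should converge along that subnet; but by construction $\bigl|\int_{L_n'} f \, d\mu_n\bigr| \geq \varepsilon_0/4$ with $f$ essentially supported away from all earlier $K_m$, which one leverages — together with the total-variation boundedness controlling the contribution outside a large $K_N$ — to show $\int f\, d\mu_n$ cannot converge, a contradiction. Thus $\mathcal{M}$ must be uniformly tight, establishing condition $(P')$.

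The delicate points to watch are: (i) making sure the weak-topology relative compactness hypothesis is strong enough to force convergence of $\int f \, d\mu_n$ along subnets — this uses that $C_b(X)$-weak relatively compact sets are bounded in total variation, which is standard; (ii) the construction of $f$ must genuinely use the $k$-space property rather than smuggling in metrizability, since $X$ need not be metrizable; and (iii) the Radon (inner-regularity) hypothesis is needed precisely to pass from mass on open complements $X \setminus K_n$ down to mass on compacta $L_n'$ of definite sign, so that the test function $f$ can see that mass. I would present the escaping-sequence construction as a lemma, prove the continuity of $f$ cleanly via the $k$-space criterion, and then finish with the short contradiction argument.
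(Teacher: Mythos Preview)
The paper does not prove this theorem; it is quoted verbatim from Fremlin--Garling--Haydon \cite{DHF} and used as a black box. So there is no in-paper argument to compare against, but your sketch has a genuine gap at the contradiction step, independently of what the cited proof does.

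The preliminary moves are sound: extracting the escaping compacta $L_n'\subset X\setminus K_n$ with $|\mu_n(L_n')|\ge\varepsilon_0/4$ uses only inner regularity and the Hahn decomposition, and the idea of building continuous functions by Tietze-extending along the fundamental sequence $K_{n_1}\subset K_{n_2}\subset\cdots$ and then invoking the k-space criterion is exactly how the hypotheses should enter. The problem is the single test function $f$. You require $f$ to equal $\pm1$ on each $L_n'$ and simultaneously to be ``small on $K_{n-1}$'' for every $n$; since $\bigcup_n K_{n-1}=X$, these demands are incompatible for one function. If you drop the smallness and merely enforce $f|_{L_k'}=\sigma_k$ with $|f|\le1$, then
\[
\int f\,d\mu_{n_k}=\sigma_k\,\mu_{n_k}(L_k')+\int_{X\setminus L_k'}f\,d\mu_{n_k},
\]
and while the first term has modulus at least $\varepsilon_0/4$, the second is only bounded by $|\mu_{n_k}|(X)$, so nothing prevents the integrals from converging along a subnet. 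Your remark that ``total-variation boundedness controls the contribution outside a large $K_N$'' is precisely uniform tightness in disguise: boundedness gives $\sup_k|\mu_{n_k}|(X)<\infty$, not $\sup_k|\mu_{n_k}|(X\setminus K_N)\to0$. Invoking the latter is circular.

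A repair needs more than one $f$. One can instead build a \emph{sequence} $f_k\in C_b(X)$ with $\|f_k\|_\infty\le1$, $f_k\equiv0$ on $K_{n_k}$, $f_k\equiv\sigma_k$ on $L_k'$ (same inductive Tietze construction), so that $f_k\to0$ uniformly on compacta while $\int f_k\,d\mu_{n_k}\ge\varepsilon_0/4$. The remaining task is to show that on a $\sigma(M,C_b)$-compact set the functionals $\mu\mapsto\int f_k\,d\mu$ tend to $0$ \emph{uniformly}; pointwise convergence (dominated convergence) on a compact set does not give this without a Dini-type monotonicity or an equicontinuity argument, and that is where the real work lies. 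The route indicated by the paper's surrounding discussion is through the $T_t$-topology on $C_b(X)$: for a hemicompact k-space one shows that $\sigma(M,C_b)$-relatively compact sets are $T_t$-equicontinuous, and $T_t$-equicontinuity is exactly norm-boundedness together with uniform tightness. Your direct construction can feed into such an argument, but it cannot finish by itself.
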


\begin{lemma}\label{401}
Let $K$ be a compact subset in $\mathscr{S}^{\prime}$, 
then there exists $n \in \mathbb{N}$ 
such that $K = i_n(L)$ for some compact set $L \subset \mathscr{S}_{-n}$. 
\end{lemma}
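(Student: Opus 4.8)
The plan is to show that any compact subset $K$ of $\mathscr{S}^{\prime}$ is already contained — as a set, and then as a topological subspace — in one of the Hilbert spaces $H_{-n}$ (which the paper writes $\mathscr{S}_{-n}$) appearing in the inductive limit $\mathscr{S}^{\prime} = \varinjlim H_{-n}$. The key structural fact I would invoke is that $\mathscr{S}^{\prime}$, being the strong dual of the Fréchet–Montel (indeed nuclear) space $\mathscr{S}$, is a (DF)-space, and more specifically that its inductive limit $\varinjlim H_{-n}$ is \emph{regular} (even compact-regular): every bounded subset, and a fortiori every compact subset, of the inductive limit is contained in and bounded in some step $H_{-n}$. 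This regularity is standard for the $(DF)$/silva-type inductive limits arising from a nuclear rigging, and is exactly the Dieudonné–Schwartz / Grothendieck phenomenon for strict (LF)- or (LB)-type limits; in the nuclear Montel case the embeddings $H_{-n} \hookrightarrow H_{-n-1}$ can even be taken compact, which makes the limit a Silva space where this is automatic.

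First I would recall the rigging: $\mathscr{S} = \varprojlim H_n$ with Hilbertian norms and compact (Hilbert–Schmidt, in fact nuclear) linking maps, so dually $\mathscr{S}^{\prime} = \varinjlim H_{-n}$ with compact linking maps $i_{m,n}: H_{-m}\to H_{-n}$ ($m<n$). Second, I would cite (or reprove via the Banach–Dieudonné / Grothendieck argument) that this inductive limit is regular: given a bounded set $B \subset \mathscr{S}^{\prime}$, there is $n$ with $B \subset i_n(H_{-n})$ and $i_n^{-1}(B)$ bounded in $H_{-n}$. A compact $K$ is bounded, so $K \subset i_n(H_{-n})$ with $L := i_n^{-1}(K)$ a bounded subset of $H_{-n}$. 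Third — and this is where compactness of the linking maps does real work — I would pass one step up: $L$ is bounded in $H_{-n}$, hence $i_{n,n+1}(L)$ is relatively compact in $H_{-n+1}$... wait, in the \emph{next} step $H_{-n-1}$; call that step $m=n+1$. Then $\overline{L'} \subset H_{-m}$ is compact, where $L' = i_{n,m}(L)$, and $K = i_m(L')$. It remains to check $L'$ (or its closure intersected with the relevant set) is actually closed, i.e. that $K$ pulls back to a \emph{closed} — hence compact — subset of $H_{-m}$; since $i_m$ is continuous and injective (the rigging maps are injective) and $K$ is compact hence closed in the Hausdorff space $\mathscr{S}^{\prime}$, the set $i_m^{-1}(K)$ is closed in $H_{-m}$, and being contained in the relatively compact $\overline{L'}$ it is compact. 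Taking $L = i_m^{-1}(K)$ finishes it, after relabelling $m$ as the claimed $n$.

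The main obstacle — the step I expect to require the most care — is establishing regularity of the inductive limit, i.e. that a compact (or merely bounded) set in $\mathscr{S}^{\prime}$ does not "leak across infinitely many steps" but sits inside a single $H_{-n}$. For a general (LB)-space this can fail, so one genuinely needs the extra input: either (a) the compactness/nuclearity of the linking maps, which makes $\mathscr{S}^{\prime}$ a Silva (DFS) space where regularity is automatic and bounded sets are relatively compact; or (b) the Montel (DF) structure of $\mathscr{S}^{\prime}$ together with Grothendieck's theorem that a (DF)-space which is the inductive limit of a sequence of Banach spaces is boundedly retractive. I would present route (a), since the standard construction of $\mathscr{S}^{\prime}$ via a Hermite (or harmonic-oscillator) rigging gives the compact linking maps for free, and then the lemma is essentially a one-line consequence plus the closedness bookkeeping of the previous paragraph. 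A secondary, purely bookkeeping point is matching the paper's index convention ($H_{-n}$ versus $\mathscr{S}_{-n}$) and making sure the "one step up" reindex is stated cleanly so the final $n$ in the statement is the one actually produced.
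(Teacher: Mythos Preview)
Your proposal is correct and structurally parallel to the paper's argument: first show $K$ sits boundedly inside a single step $H_{-m}$, then use compactness of the linking map to pass to a further step where the preimage becomes compact. The difference lies in how you justify the first step. You invoke the abstract regularity of Silva/(DFS) inductive limits (or alternatively Grothendieck's (DF) machinery), treating it as a citable black box. The paper instead proves this step by hand via polarity: since $K$ is compact (hence bounded) in $\mathscr{S}^{\prime}$, its absolute polar $K^{\circ}$ is a neighbourhood of $0$ in $\mathscr{S}$, so it contains a ball $\{\,|x|_{H_n}<\delta\,\}$ for some $n$; taking polars back, $K$ lands in a bounded set of $H_{-m}$. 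After that the two arguments coincide (the paper jumps two indices, $m\to m+2$, to ensure the embedding is compact; you jump one). Your route is cleaner if one is willing to quote the Silva regularity theorem, while the paper's polar argument is more self-contained and avoids importing that theory. Either way the ``one step up'' trick with the compact embedding is the same, and your closedness bookkeeping matches what the paper leaves implicit.
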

\begin{proof}
Let us use the notations in Theorem $\ref{252}$. 
Since the absolute polar set of $K$ is a neighborhood of $0$, it contains the set $\{x \in \mathscr{S} ~| ~|x|_{H_n} <\delta  \}$ for some $n \in \mathbb{N}$ and $\delta >0$, and hence $K$ is contained in a bounded set in $H_{-m}$. 
Thus ${i_m}^{-1}(K)$ is bounded and closed. 
Since $i_{m,m+2}$ is 
compact, ${i_{m+2}}^{-1}(K)$ is compact. 
\end{proof}

\begin{theorem}\label{261}
$\mathscr{S}^{\prime}$ is hemicompact. 
\end{theorem}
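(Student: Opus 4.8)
The plan is to exhibit an explicit fundamental sequence of compact sets, built from the Hilbertian scale $\{H_{-n}\}$ used in the proof of Theorem \ref{252}. The natural candidates are the images under $i_n$ of closed balls in $H_{-n}$; but a closed ball of $H_{-n}$ need not have compact image in $\mathscr{S}^{\prime}$, so I would instead pass through one extra step of the scale, exactly as in Lemma \ref{401}. Concretely, for each $n$ let $\bar B_n(m)$ denote the closed ball of radius $m$ in $H_{-n}$, and set
\[
K_{n} \;=\; i_{n}\bigl(i_{n-2,\,n}(\bar B_{n-2}(n))\bigr)\;\subset\;\mathscr{S}^{\prime},
\]
where $i_{n-2,n}\colon H_{-(n-2)}\to H_{-n}$ is the (compact) canonical inclusion. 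By the Lemma preceding Theorem \ref{252}, $i_{n-2,n}(\bar B_{n-2}(n))$ is closed in $H_{-n}$, and since the inclusion is compact it is in fact compact in $H_{-n}$; hence $K_n$, being the continuous image of a compact set, is compact in $\mathscr{S}^{\prime}$. One checks that the sequence $\{K_n\}$ is increasing (after reindexing so the radii and the scale indices both grow), using that the scale maps are compatible and that a ball of radius $n$ sits inside a ball of radius $n+1$; this is the routine bookkeeping step.

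The heart of the argument is the fundamentality: every compact $K\subset\mathscr{S}^{\prime}$ is contained in some $K_n$. This is precisely where Lemma \ref{401} does the work. Given compact $K$, that lemma produces an index $m$ and a compact $L\subset H_{-m}$ with $K=i_m(L)$; moreover, inspecting its proof, $L$ is contained in a closed ball $\bar B_m(r)$ of $H_{-m}$ for some radius $r$, and $L$ is in fact the image under the compact map $i_{m-2,m}$ of a closed bounded set in $H_{-(m-2)}$. Choosing $n\ge m$ large enough that $n\ge r$ and the scale index matches, we get $K=i_m(L)\subset i_n(i_{m-2,m}(\bar B_{m-2}(r)))\subset K_n$, using the transitivity $i_m = i_n\circ i_{m,n}$ of the canonical imbeddings. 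So $K\subset K_n$, as required.

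The main obstacle I anticipate is purely one of organization rather than of mathematical depth: one must be careful that the single index $n$ appearing in $K_n$ simultaneously controls the \emph{radius} of the ball, the \emph{level} of the scale at which the ball lives, and the \emph{extra step} needed for compactness, and that these three roles can be synchronized so that $\{K_n\}$ is genuinely increasing and genuinely absorbs every compact set. Once the indexing convention is fixed, each individual verification — compactness of $K_n$, monotonicity, and fundamentality — follows immediately from the lemma before Theorem \ref{252}, from Lemma \ref{401}, and from the definition of the inductive limit. No new analytic input beyond these is needed.
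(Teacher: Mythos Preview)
Your proposal is correct and follows essentially the same route as the paper: both take $K_n$ to be the image in $\mathscr{S}'$ of a closed ball from the Hilbertian scale $\{H_{-k}\}$ and deduce fundamentality directly from Lemma~\ref{401}. The paper handles your anticipated bookkeeping obstacle by choosing the radii adaptively---setting $K_n = i_n(B_n(r_n))$ with $r_n$ large enough that $i_{k,n}(B_k(n)) \subset B_n(r_n)$ for every $k \le n$---rather than by shifting the scale index by two, but this is purely a presentational difference.
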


\begin{proof}
Set $r_1=1$ and set $K_1 = i_1(B_1(r_1))$. 
For each $n \in \mathbb{N}$, set $r_n >0$ large enough so that 
$i_{k,n}(B_k(n)) \subset B_n(r_n)$ for all $1 \leq k \leq n$. 
Set $K_n = i_n(B_n(r_n))$, then it is immediate that $\cup K_n = \mathscr{S}^{\prime}$. 
And by the previous lemma, we obtain any compact set is contained in some $K_n$. 
\end{proof}

\begin{corollary}\label{290}
$\mathscr{S}^{\prime}$ satisfies condition $(P^{\prime})$, and hence condition $(P)$. 
\end{corollary}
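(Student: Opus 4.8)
The plan is to feed the two structural results just established directly into the cited theorem from \cite{DHF}. That theorem asks for three hypotheses on the space $X$: that $X$ be a Radon space, that $X$ be hemicompact, and that $X$ be a k-space. The last two are exactly Theorems \ref{261} and \ref{252}, so the only point that still needs to be addressed is that $\mathscr{S}^{\prime}$ with the strong topology is a Radon space.

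For this I would recall that $\mathscr{S}$ is a separable Fréchet space, so its strong dual $\mathscr{S}^{\prime}$ is a Suslin space: concretely, it is the union of the separable Hilbert (hence Polish) spaces $H_{-n}$, and as the strong dual of a separable Fréchet space it is a continuous image of a Polish space. Every Suslin space is a Radon space, i.e. every finite Borel measure on it is inner regular with respect to compact sets, so the Radon hypothesis holds. With all three hypotheses in place, the cited theorem yields that $\mathscr{S}^{\prime}$ satisfies condition $(P^{\prime})$.

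To pass from $(P^{\prime})$ to $(P)$, I would note that the space of probability Borel measures carries the subspace topology inherited from the space of signed Borel measures, so any family of probability measures that is relatively compact in the former is a fortiori relatively compact in the latter; condition $(P^{\prime})$ then forces it to be uniformly tight, which is condition $(P)$. I do not anticipate any genuine obstacle here, since the substance of the corollary lies entirely in Theorems \ref{252} and \ref{261} together with the standard fact that Suslin spaces are Radon; the only thing needing a moment's care is checking that the relative compactness in $(P)$ is the restriction of the one used in $(P^{\prime})$, which it is.
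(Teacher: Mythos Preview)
Your proposal is correct and matches the paper's approach exactly: the corollary is stated without proof in the paper, being the immediate combination of the cited theorem from \cite{DHF} with Theorems~\ref{252} and~\ref{261}, together with the (implicit) fact that $\mathscr{S}^{\prime}$ is a Radon space. You have simply spelled out the details the paper leaves tacit, including the Suslin-space justification for the Radon hypothesis and the routine passage from $(P^{\prime})$ to $(P)$.
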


\begin{remark} 
If $X$ is a Fr\'{e}chet space and is not locally compact, $X$ is never 
hemicompact. It is because if $X$ is $\sigma$-compact, 
we have contradiction by Baire category theorem. 
\end{remark}

\begin{remark}
It can be proved that $\mathscr{S}^{\prime}$ satisfies condition $(P)$ from a different point of view. (See \cite[Th\'{e}or\`{e}me I.6.5]{FER}).  
\end{remark}

\begin{proposition}
$\mathscr{D}^{\prime}$ is not hemicompact. 
\end{proposition}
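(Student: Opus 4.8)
\noindent The plan is to exploit the good duality properties of $\mathscr{D}$ and $\mathscr{D}'$ and show that hemicompactness of $\mathscr{D}'$ would force $\mathscr{D}$ to be metrizable, which it is not. Recall two standard facts: $\mathscr{D}$ is barrelled (being an $(\mathrm{LF})$-space), and $\mathscr{D}$ is a Montel space, so that its strong dual $\mathscr{D}'$ is again Montel; in particular every bounded subset of $\mathscr{D}'$ is relatively compact. Suppose, towards a contradiction, that $\mathscr{D}'$ is hemicompact, with fundamental increasing sequence of compact sets $\{K_n\}$. If $B \subset \mathscr{D}'$ is bounded, then $\overline{B}$ is compact, hence contained in some $K_n$, so $B \subset K_n$; and each $K_n$, being compact, is bounded. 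Thus $\{K_n\}$ is a \emph{fundamental sequence of bounded subsets} of $\mathscr{D}'$.

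Next I would transfer this to a statement about $\mathscr{D}$ itself. Since $\mathscr{D}$ is barrelled, every bounded subset of $\mathscr{D}'$ is equicontinuous, so for each $n$ there is a closed absolutely convex neighbourhood $U_n$ of $0$ in $\mathscr{D}$ with $K_n \subset U_n^{\circ}$. Conversely, let $U$ be an arbitrary closed absolutely convex neighbourhood of $0$ in $\mathscr{D}$; its polar $U^{\circ}$ is equicontinuous, hence bounded in $\mathscr{D}'$, hence contained in some $K_n$, so $U^{\circ} \subset U_n^{\circ}$. Taking polars back in $\mathscr{D}$ and applying the bipolar theorem gives $U_n = U_n^{\circ\circ} \subset U^{\circ\circ} = U$. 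Since the closed absolutely convex neighbourhoods of $0$ form a base, $\{U_n\}_{n \geq 1}$ is then a countable neighbourhood base at $0$, so $\mathscr{D}$ is metrizable.

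This is the desired contradiction: $\mathscr{D}$ is not metrizable, since it is the strict inductive limit of the proper closed subspaces $\mathscr{D}_K$ ($K$ running over an exhaustion by compacts) and is therefore complete but meager in itself, contradicting the Baire category theorem if it were metrizable. The only step that really needs care is the passage from ``fundamental sequence of compact sets'' to ``fundamental sequence of bounded sets'', i.e.\ the use of the Montel property of $\mathscr{D}'$; once that is in place, the remainder is the classical duality argument that a barrelled space whose strong dual has a fundamental sequence of bounded sets must be metrizable. (One could instead construct, by a diagonal argument, an explicit distribution lying outside every $K_n$, but invoking the Montel and barrelledness properties seems cleaner and shorter.)
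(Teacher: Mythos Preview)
Your proof is correct and follows essentially the same route as the paper: both arguments take polars to convert a hypothetical fundamental sequence of compact sets in $\mathscr{D}'$ into a countable neighbourhood base at $0$ in $\mathscr{D}$, and then invoke the non--first-countability (equivalently, non-metrizability) of $\mathscr{D}$. The paper is simply more terse, packaging your barrelledness and bipolar steps into the two statements ``the polar of a compact set in $\mathscr{D}'$ is a neighbourhood of $0$'' and ``the polar of a neighbourhood of $0$ is compact'', whereas you spell out the Montel and barrelled ingredients explicitly and name the classical fact that a barrelled space whose strong dual has a fundamental sequence of bounded sets must be metrizable.
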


\begin{proof}
Since $\mathscr{D}$ is Montel, 
$
\{f \in \mathscr{D}^{\prime} | ~
|\langle x, f \rangle| \leq 1, \forall x \in O  \}
$, 
the absolute polar set of an open set 
$O \subset \mathscr{D}$, is compact, and the absolute polar set of 
a compact subset in $\mathscr{D}^{\prime}$ is a neighborhood of $0$ in 
$\mathscr{D}$. 
Assume that there exists a fundamental sequence of compact sets. Then by taking absolute polar sets, there exists a sequence 
$\{O_n\}$ in $\mathscr{D}$ such that each $O_n$ is a neighborhood of $0$ and any neighborhood of $0$ contains some $O_n$. Hence it follows that $\mathscr{D}$ is first-countable, which is a contradiction. 
\end{proof}

Though $\mathscr{D}^{\prime}$ is not hemicompact,  
it satisfies condition $(P)$ because 
$\mathscr{D}$ is the strict inductive limit of 
a sequence of Fr\'{e}chet-Montel spaces (see \cite{FER}). 
But the methods of \cite{FER}, which make use of positivity, do not work well when measures are signed.

\section{Continuous functions}
In this section, we apply the results in Section 2 to the analysis of continuous functions. As a direct consequence, the structure of k-spaces gives a useful criterion 
for continuity.  
Let us take an example from 
White Noise Analysis \cite[Theorem 4.7.]{HID}. 
The property of k-spaces would help making the proof of the following theorem clearer because we only need to prove the continuity on each compact subset of $\mathscr{S}^{\prime}$. 
Actually, by Lemma $\ref{401}$, all we have to see is the continuity in each bounded set in $H_{-n}$.  

\begin{theorem}
Let $(\mathscr{S})$ be the space of Hida test functionals (\cite{HID}). 
Every $\varphi \in (\mathscr{S})$ has a unique pointwise defined, 
strongly continuous representative. 
\end{theorem}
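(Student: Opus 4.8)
The plan is to write down the continuous representative explicitly via the Wiener--It\^o chaos decomposition, to prove that it is continuous \emph{on every compact subset} of $\mathscr{S}'$, and then to invoke the k-space property to conclude that it is continuous on all of $\mathscr{S}'$. Recall that every $\varphi\in(L^2)=L^2(\mathscr{S}',\mu)$ has an expansion $\varphi=\sum_{n\ge0}\langle{:}x^{\otimes n}{:},f_n\rangle$ with $f_n$ symmetric and $\|\varphi\|_p^2=\sum_{n\ge0}n!\,|f_n|_p^2$, where $|\cdot|_p$ denotes the $n$-fold Hilbertian tensor norm built from the scale $\{H_p\}$ defining $\mathscr{S}$, and that $\varphi\in(\mathscr{S})$ means precisely $\|\varphi\|_p<\infty$ for every $p$. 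Here ${:}x^{\otimes n}{:}$ is the Wick-ordered tensor power, a polynomial in $x$ of the shape $\sum_{k}c_{n,k}\,x^{\otimes(n-2k)}\,\hat\otimes\,\tau^{\otimes k}$, with $\tau$ the trace kernel (which lies in $H_{-p}\otimes H_{-p}$ once $p$ is large); in particular each summand $x\mapsto\langle{:}x^{\otimes n}{:},f_n\rangle$ is continuous on $\mathscr{S}'$ and locally bounded on each $H_{-n}$.

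First I would establish the analytic heart of the matter: an estimate of the form $|{:}x^{\otimes n}{:}|_{-p}\le\sqrt{n!}\,\bigl(A_p(1+|x|_{-p}^2)^{1/2}\bigr)^{n}$ valid for $p$ large, together with the second-quantization contraction $|f_n|_p\le\delta(q-p)^{\,n}\,|f_n|_q$, where $\delta(q-p)=\lambda_0^{-(q-p)}\to0$ as $q\to\infty$, $\lambda_0>1$ being the smallest eigenvalue of the operator whose powers generate the scale $\{H_p\}$. Fixing $p$ and a radius $R$, choosing $q$ so large that $A_p(1+R^2)^{1/2}\,\delta(q-p)<1$, and applying the Cauchy--Schwarz inequality against $\sum_n n!\,|f_n|_q^2=\|\varphi\|_q^2<\infty$, I obtain that $\sum_n\langle{:}x^{\otimes n}{:},f_n\rangle$ converges \emph{uniformly} on the ball $\{x:|x|_{-p}\le R\}$. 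Hence the pointwise sum $\tilde\varphi$ is continuous on every bounded subset of every $H_{-n}$. By Lemma~\ref{401}, each compact $K\subset\mathscr{S}'$ is contained in such a bounded set and carries the subspace topology it inherits from $H_{-n}$, so $\tilde\varphi|_K$ is continuous for every compact $K$; since $\mathscr{S}'$ is a k-space by Theorem~\ref{252}, $\tilde\varphi$ is continuous on $\mathscr{S}'$ with the strong topology.

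It remains to identify $\tilde\varphi$ with $\varphi$ and to prove uniqueness. The partial sums $\sum_{n\le N}\langle{:}x^{\otimes n}{:},f_n\rangle$ converge to $\varphi$ in $(L^2)$ by construction of the chaos expansion and converge pointwise to $\tilde\varphi$ by the previous paragraph; extracting a $\mu$-a.e.\ convergent subsequence of the former shows $\tilde\varphi=\varphi$ $\mu$-almost everywhere, so $\tilde\varphi$ is a pointwise-defined, strongly continuous representative. For uniqueness, if $\psi_1,\psi_2$ are two such representatives, then $\{x:\psi_1(x)=\psi_2(x)\}$ is closed and of full $\mu$-measure; since the white noise measure has full topological support on $\mathscr{S}'$ (every nonempty open set has positive measure), its complement is empty, i.e.\ $\psi_1=\psi_2$ everywhere.

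The step I expect to be the main obstacle is the estimate in the second paragraph: what is needed is not merely pointwise convergence of the chaos series but convergence that is \emph{uniform on bounded sets}, since that is what forces $\tilde\varphi$ to be continuous there. This is where the combinatorics of Wick ordering (controlling $|{:}x^{\otimes n}{:}|_{-p}$) must be handled in tandem with the nuclearity-driven index shift $p\rightsquigarrow q$ that absorbs the growth in $|x|_{-p}$. By contrast, the passage from ``continuous on each compact set'' to ``continuous on $\mathscr{S}'$'' is essentially free once Theorem~\ref{252} and Lemma~\ref{401} are available---which is exactly the simplification this section is advertising.
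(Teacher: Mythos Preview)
Your approach is exactly the one the paper advocates. The paper does not actually give a proof of this theorem; it only remarks that, thanks to Theorem~\ref{252} and Lemma~\ref{401}, the proof from \cite{HID} becomes clearer because one only has to check continuity on bounded subsets of each $H_{-n}$---and that is precisely the reduction you carry out, supplying the chaos-expansion estimates that the paper leaves implicit.
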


The k-space structure of the domain space is also helpful for the analysis of the space of continuous functions. 
Let $C(\mathscr{S}^{\prime})$ be the space of continuous functions defined on $\mathscr{S}^{\prime}$. 
We will denote by $T_K$ the topology of uniform convergence on every compact set. Since $\mathscr{S}^{\prime}$ is a k-space, it follows that $T_K$ is complete. Furthermore, as $\mathscr{S}^{\prime}$ is hemicompact, the convergence on each $K_n$ is sufficient for $T_K$ convergence, where $\{K_n\}$ is a fundamental sequence. 
Hence this topology is metrizable. 
It also follows that $T_K$ is separable, because each $C(K_n)$ is separable. 
Summarizing, we have the following theorem.  
\begin{theorem}
The topological vector space $C(\mathscr{S}^{\prime})$ equipped with $T_K$ topology is a separable Fr\'{e}chet space. 
\end{theorem}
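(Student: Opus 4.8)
The plan is to assemble the conclusion from the three structural facts already established: $\mathscr{S}^{\prime}$ is a k-space (Theorem \ref{252}), it is hemicompact (Theorem \ref{261}), and each compact set sits inside some $i_n(L)$ with $L$ compact metrizable (Lemma \ref{401}). Throughout I fix a fundamental sequence $\{K_n\}$ of compact subsets of $\mathscr{S}^{\prime}$ with $K_n \subset K_{n+1}$. The strategy is: (1) identify $C(\mathscr{S}^{\prime})$ with a closed subspace of the countable product $\prod_n C(K_n)$ via restriction maps; (2) deduce metrizability and completeness from this; (3) deduce separability from separability of each $C(K_n)$; and (4) check the topological-vector-space axioms, which are routine.

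First I would make precise the map $\rho : C(\mathscr{S}^{\prime}) \to \prod_{n} C(K_n)$, $f \mapsto (f|_{K_n})_n$. Because $\mathscr{S}^{\prime}$ is a k-space, a function on $\mathscr{S}^{\prime}$ is continuous iff its restriction to every compact set is continuous; and because $\{K_n\}$ is fundamental, every compact $K$ lies in some $K_n$, so continuity on all $K_n$ already forces continuity on all compact sets. Hence $\rho$ is injective with image exactly the coherent families $\{(g_n) : g_{n+1}|_{K_n} = g_n\}$, a closed subspace of the product (closedness: each condition $g_{n+1}|_{K_n} = g_n$ is closed since restriction maps $C(K_{n+1}) \to C(K_n)$ are continuous and $C(K_n)$ is Hausdorff). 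I would then observe that $T_K$ is, by definition, the topology of uniform convergence on each $K_n$ — i.e. the initial topology from the sup-norm seminorms $p_n(f) = \sup_{K_n}|f|$ — which is precisely the subspace topology inherited from $\prod_n C(K_n)$ under $\rho$. A countable family of seminorms that is separating (separating because $\bigcup K_n = \mathscr{S}^{\prime}$) gives a metrizable locally convex topology; completeness follows because $\rho$ maps $C(\mathscr{S}^{\prime})$ onto a \emph{closed} subspace of the complete space $\prod_n C(K_n)$ (each $C(K_n)$ with the sup norm is a Banach space). So $C(\mathscr{S}^{\prime})$ is a Fréchet space.

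Separability is the remaining point. Each $K_n = i_n(L_n)$ is a continuous image of a compact metric space, hence compact metrizable, hence second countable, hence $C(K_n)$ with the sup norm is separable. A countable product of separable metrizable spaces is separable, so $\prod_n C(K_n)$ is separable, and a subspace of a separable metrizable space is separable; thus $C(\mathscr{S}^{\prime})$ is separable. I would spell out that $K_n$ is metrizable either by invoking Lemma \ref{401} directly (continuous image of a compact metrizable $L \subset \mathscr{S}_{-n}$) or by noting $K_n$ is a compact subset of the metrizable Hilbert space $H_{-n}$; the former is cleanest.

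The main obstacle — really the only nontrivial one — is verifying that $\rho$ hits \emph{exactly} the closed subspace of coherent families, i.e. that every coherent family $(g_n)$ glues to a genuinely continuous $g$ on $\mathscr{S}^{\prime}$. Set-theoretic gluing is immediate from $\bigcup K_n = \mathscr{S}^{\prime}$ and the compatibility conditions; the content is that the glued $g$ is continuous, and this is exactly where the k-space property of $\mathscr{S}^{\prime}$ (Theorem \ref{252}) together with hemicompactness (Theorem \ref{261}) is used: $g$ restricted to an arbitrary compact $K \subset K_n$ equals $g_n|_K$, hence is continuous, so $g$ is continuous. Everything else — that $T_K$ is a vector topology (addition and scalar multiplication are continuous for the sup-seminorms), local convexity, Hausdorffness — is standard for spaces defined by a separating family of seminorms and I would dispatch it in one line.
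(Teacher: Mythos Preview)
Your argument is correct and follows essentially the same route as the paper: completeness from the k-space property, metrizability from hemicompactness (countably many seminorms $p_n(f)=\sup_{K_n}|f|$), and separability from separability of each $C(K_n)$. You have simply made explicit the embedding $\rho:C(\mathscr{S}')\hookrightarrow\prod_n C(K_n)$ and the identification of its image with the closed subspace of coherent families, which is precisely the detailed form of what the paper sketches in the paragraph preceding the theorem.
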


There is an 
analogous topology on the space of bounded continuous functions $C_b(X)$, so called 
$T_t$-topology, introduced by L. Le Cam \cite{LC}. 
It is known that if $X$ is a k-space, then $(C_b(X), T_t)$ is 
complete. If $X$ is a Radon space, then 
the dual of $(C_b(X), T_t)$ is $M(X)$, the space of bounded measures.  
If $X$ satisfies condition $(P^{\prime})$, then $T_t$ coincides with 
the Mackey topology $\tau \left( C_b(X), M(X) \right)$. 
See \cite{DHF}, \cite{LC} for details. 

\section{Infinite dimensional characteristic functions}

In this section, we discuss the characteristic functions of measures on $\mathscr{S}^{\prime}$. The following two results are known (see Fernique \cite{FER}), but it is worth pointing out that our proof is based on the property of k-spaces, while in \cite{FER} 
tensor products of nuclear spaces is used. 

For a probability measure $\mu$ on $\mathscr{S}^{\prime}$ and $\varphi \in \mathscr{S}$, we denote the characteristic function of $\mu$ by 
$\widehat{\mu}(\varphi) = \int_{\mathscr{S}^{\prime}} \exp\left( i \langle x, \varphi \rangle \right) d\mu (x)$. 

\begin{theorem}\label{420}
Let $\{ \mu _n \}$ be a sequence of probability measures on $\mathscr{S}^{\prime}$. Assume that $\widehat{\mu_n}(\varphi)$ converges 
for every $\varphi \in \mathscr{S}$. 
Then $\{ \mu_n\}$ converges weakly to some probability measure if and only if $\{ \widehat{\mu_n} \}$ is equicontinuous at $0$, 
that is, for all $\varepsilon >0$, there exist $m \in \mathbb{N}$ and $\delta>0$ such that $|\varphi|_m < \delta \Rightarrow 
|1 - \widehat{\mu_n}(\varphi) | < \varepsilon$ for all $n$. 
\end{theorem}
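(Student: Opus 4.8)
The plan is to prove the two implications separately, using the machinery established in Sections 2 and 3. For the easy direction, suppose $\{\mu_n\}$ converges weakly to a probability measure $\mu$. Then by definition $\widehat{\mu_n}(\varphi) \to \widehat{\mu}(\varphi)$ for every $\varphi \in \mathscr{S}$, and $\widehat{\mu}$ is continuous at $0$ because $\mu$ is a probability measure (dominated convergence applied to the net $\langle x, \varphi_j\rangle \to 0$ pointwise, with integrands bounded by $1$). The nontrivial point is to upgrade pointwise convergence of the $\widehat{\mu_n}$ plus continuity of the limit into \emph{equicontinuity} of the family $\{\widehat{\mu_n}\}$ at $0$. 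Here I would invoke a Banach--Steinhaus / Baire category argument on $\mathscr{S}$: since $\mathscr{S}$ is a Fr\'echet space and the maps $\varphi \mapsto \widehat{\mu_n}(\varphi)$ are equicontinuous on each of the sets $i_n(B_n(r_n))$... more precisely, I would use that a pointwise-convergent sequence of equicontinuous maps on a barrelled space has equicontinuous closure, together with the observation that each $\widehat{\mu_n}$ is uniformly continuous with a modulus controlled by the one-dimensional characteristic functions. Alternatively, one can derive equicontinuity at $0$ directly from uniform tightness, which holds by Corollary~\ref{290} once we know relative compactness — but in \emph{this} direction we are \emph{assuming} weak convergence, so tightness of $\{\mu_n\}$ is automatic (a weakly convergent sequence is relatively compact, hence uniformly tight on the Radon space $\mathscr{S}^{\prime}$), and tightness plus the bound $|1 - \widehat{\mu_n}(\varphi)| \leq \int |1 - e^{i\langle x,\varphi\rangle}| \, d\mu_n \leq \int_{K} |\langle x,\varphi\rangle| \, d\mu_n + 2\mu_n(K^c)$ over a well-chosen compact $K$ gives the $\varepsilon$-$\delta$ statement, using Lemma~\ref{401} to estimate $\sup_{x \in K} |\langle x, \varphi\rangle|$ by a continuous seminorm $|\varphi|_m$.

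For the hard direction — equicontinuity at $0$ implies weak convergence to a probability measure — I would proceed in three steps. First, establish uniform tightness of $\{\mu_n\}$: this is the classical Minlos--Prohorov-type step where equicontinuity of the characteristic functions at $0$ is converted into a tightness estimate. Concretely, equicontinuity gives, for each $\varepsilon$, an $m$ and $\delta$ with $|1 - \widehat{\mu_n}(\varphi)| < \varepsilon$ whenever $|\varphi|_m < \delta$; averaging $\operatorname{Re}(1 - \widehat{\mu_n})$ against a Gaussian on the finite-dimensional quotients of $\mathscr{S}$ by the kernel of $|\cdot|_m$, and using nuclearity of the embedding $H_m \hookrightarrow H_{m'}$ for suitable $m' > m$, yields $\mu_n\big(\{x : |x|_{-m'} > R\}\big) < C\varepsilon$ uniformly in $n$ for $R$ large. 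Since the balls $\{|x|_{-m'} \le R\}$ are, after pushing through the compact embeddings as in Theorem~\ref{252} and Lemma~\ref{401}, relatively compact in $\mathscr{S}^{\prime}$, this is exactly uniform tightness.

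Second, once $\{\mu_n\}$ is uniformly tight, Prohorov's theorem (available since $\mathscr{S}^{\prime}$ is a Radon space — indeed it satisfies condition $(P')$ by Corollary~\ref{290}, though for this direction the classical Prohorov theorem for Polish or Radon spaces suffices) gives relative compactness of $\{\mu_n\}$ in the weak topology: every subsequence has a further subsequence converging weakly to some probability measure $\mu$. Third, I would pin down the limit: for any such subsequential limit $\mu$, weak convergence along that subsequence forces $\widehat{\mu}(\varphi) = \lim_n \widehat{\mu_n}(\varphi)$ for every $\varphi$, and since the full sequence $\widehat{\mu_n}(\varphi)$ converges by hypothesis, all subsequential limits have the same characteristic function. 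By the uniqueness theorem for characteristic functions on $\mathscr{S}^{\prime}$ (two Radon probability measures with the same characteristic function coincide — this is where one normally cites Bochner--Minlos or Fernique), all subsequential limits equal a single $\mu$, and therefore the full sequence $\{\mu_n\}$ converges weakly to $\mu$.

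The main obstacle is the first step of the hard direction: converting equicontinuity at $0$ into uniform tightness. This is the genuine analytic content and is where the specific structure of $\mathscr{S}^{\prime}$ as a countable inductive limit of Hilbert spaces with nuclear (indeed Hilbert--Schmidt) linking maps enters decisively — we need a quantitative Gaussian-averaging estimate on finite-dimensional sections together with summability of the eigenvalues of the embeddings $H_m \hookrightarrow H_{m'}$ to produce a tightness bound on a genuinely compact (not merely bounded) set. The $\varphi \in \mathscr{S}$ here must be split along the seminorm $|\cdot|_m$, and the interplay between ``bounded in $H_{-m}$'' and ``compact in $\mathscr{S}^{\prime}$'' is precisely what Lemma~\ref{401} licenses. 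Everything after tightness (Prohorov + uniqueness of characteristic functions) is standard soft-analysis machinery.
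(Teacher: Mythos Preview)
Your proposal is correct and follows essentially the same architecture as the paper's proof: both directions pivot on the equivalence ``equicontinuity of $\{\widehat{\mu_n}\}$ at $0$ $\Leftrightarrow$ uniform tightness of $\{\mu_n\}$'', then use Corollary~\ref{290} for (weak convergence $\Rightarrow$ relatively compact $\Rightarrow$ tight) and Prohorov plus uniqueness of characteristic functions for the converse. The only substantive difference is packaging: the paper invokes that equivalence as a black box (``By Minlos' theorem \ldots see \cite{BO}''), whereas you spell out both halves --- the estimate $|1-\widehat{\mu_n}(\varphi)| \le \int_K |\langle x,\varphi\rangle|\,d\mu_n + 2\mu_n(K^c)$ together with Lemma~\ref{401} for one direction, and the Gaussian-averaging/nuclearity argument for the other. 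Your initial Banach--Steinhaus idea would not work as stated (the $\widehat{\mu_n}$ are not linear), but you correctly abandon it for the tightness route, which is exactly what the paper does.
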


\begin{proof}
By Minlos' theorem, the equicontinuity of characteristic functions is equivalent to uniform tightness of probability measures (see e.g. \cite{BO}). 

If $\{ \mu _n\}$ converges weakly, $\{ \mu_n \}$ is a relatively compact subset, and hence by Corollary $\ref{290}$, $\{\mu _n\}$ is uniformly tight. 
Conversely, if $\{ \widehat{\mu_n} \}$ is equicontinuous at $0$, 
then $\{ \mu_n \}$ is a relatively compact subset. Since the limit of a subnet of $\{ \mu_n \}$ is uniquely determined by the characteristic function, $\{\mu _n\}$ is convergent. 
\end{proof}

\begin{theorem}\label{440}
Let $\{ \mu _n \}$ and $\mu$ be probability measures on $\mathscr{S}^{\prime}$. 
Then $\{ \mu _n \}$ converges to $ \mu $ weakly if and only if 
$\{ \widehat{\mu _n }(\varphi) \}$ converges to $\widehat{ \mu} (\varphi)$ for every $\varphi \in \mathscr{S}$. 
\end{theorem}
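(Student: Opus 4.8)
The plan is to prove Theorem \ref{440} by deducing it from Theorem \ref{420}, since the latter already establishes the key compactness/tightness machinery. One direction is essentially free: if $\{\mu_n\}$ converges to $\mu$ weakly, then for each fixed $\varphi \in \mathscr{S}$ the function $x \mapsto \exp(i\langle x, \varphi\rangle)$ is bounded and continuous on $\mathscr{S}^{\prime}$, so by the definition of weak convergence of measures, $\widehat{\mu_n}(\varphi) = \int \exp(i\langle x,\varphi\rangle)\, d\mu_n \to \int \exp(i\langle x,\varphi\rangle)\, d\mu = \widehat{\mu}(\varphi)$.

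For the converse, suppose $\widehat{\mu_n}(\varphi) \to \widehat{\mu}(\varphi)$ for every $\varphi \in \mathscr{S}$. In particular $\{\widehat{\mu_n}(\varphi)\}$ converges for every $\varphi$, so Theorem \ref{420} applies once we verify that $\{\widehat{\mu_n}\}$ is equicontinuous at $0$. This is the crux of the argument. Since $\widehat{\mu}$ is itself the characteristic function of the probability measure $\mu$ on $\mathscr{S}^{\prime}$, by Minlos' theorem (as invoked in the proof of Theorem \ref{420}) $\widehat{\mu}$ is continuous at $0$ with respect to some Hilbertian seminorm $|\cdot|_m$. The pointwise convergence $\widehat{\mu_n} \to \widehat{\mu}$ alone does not immediately give uniform equicontinuity, so I would instead argue by noting that the sequence of \emph{positive-definite} functions $\widehat{\mu_n}$ converges pointwise to a function $\widehat{\mu}$ that is continuous at $0$; by a standard equicontinuity result for positive-definite functions on nuclear spaces (a Lévy-type continuity theorem, again essentially Minlos/Bochner--Minlos), pointwise convergence to a limit continuous at the origin forces the family $\{\widehat{\mu_n}\}$ to be equicontinuous at $0$. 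Once equicontinuity at $0$ is in hand, Theorem \ref{420} gives that $\{\mu_n\}$ converges weakly to \emph{some} probability measure $\nu$; and then by the first (easy) direction $\widehat{\nu}(\varphi) = \lim_n \widehat{\mu_n}(\varphi) = \widehat{\mu}(\varphi)$ for all $\varphi$, so by the injectivity of the characteristic-function transform (Minlos' theorem again, or the uniqueness clause used in Theorem \ref{420}) we conclude $\nu = \mu$.

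I expect the main obstacle to be the passage from pointwise convergence of the characteristic functions to equicontinuity of the family at $0$ — that is, making rigorous the Lévy-continuity-theorem step on $\mathscr{S}^{\prime}$. The honest way to handle it is to invoke the Bochner--Minlos framework carefully: each $\widehat{\mu_n}$ is positive definite and normalized, the pointwise limit $\widehat{\mu}$ is positive definite and continuous at $0$, and on a nuclear space this configuration is known to yield uniform equicontinuity of the $\widehat{\mu_n}$ near $0$ (equivalently, uniform tightness of the $\mu_n$ directly). This is precisely the infinite-dimensional analogue of the classical fact that pointwise convergence of characteristic functions on $\mathbb{R}^d$ to a function continuous at $0$ implies tightness. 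Everything else — the easy direction, the identification of the limit, the appeal to Theorem \ref{420} — is routine. I would present the argument so that the role of the k-space structure is transparent: it enters through Corollary \ref{290} inside Theorem \ref{420}, which is what lets relative compactness of $\{\mu_n\}$ be upgraded to uniform tightness in the first place.
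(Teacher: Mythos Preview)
Your overall architecture matches the paper's: the easy direction is identical, and for the hard direction both you and the paper reduce everything to showing that pointwise convergence $\widehat{\mu_n}\to\widehat{\mu}$ forces equicontinuity of $\{\widehat{\mu_n}\}$ at $0$, after which Theorem \ref{420} finishes the job.

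The gap is precisely at the step you yourself flag as the ``main obstacle.'' You propose to handle it by \emph{invoking} a L\'evy-type continuity theorem on nuclear spaces: ``pointwise convergence of positive-definite functions to a limit continuous at $0$ implies equicontinuity at $0$.'' But that statement is essentially Theorem \ref{440} itself (or rather, its nontrivial half); citing it is circular. Bochner--Minlos gives you continuity of the \emph{single} limit function $\widehat{\mu}$ and the bijection between measures and continuous positive-definite functions, but it does not by itself upgrade pointwise convergence of a sequence to equicontinuity. That passage requires an additional argument, and supplying it is the whole content of the proof.

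The paper fills this gap with an explicit construction that uses the k-space property of $\mathscr{S}^{\prime}$ in an essential way. Arguing by contradiction, one extracts $\varphi_k\to 0$ and indices $n_k$ with $1-\mathrm{Re}\,\widehat{\mu_{n_k}}(\varphi_k)\geq\eta$, then passes to a subsequence with $\sum_l|\varphi_{k_l}|_p^2<\infty$ for every $p$ and forms
\[
F(x)=\exp\Bigl(-\sum_{l}\langle\varphi_{k_l},x\rangle^2\Bigr).
\]
The k-space structure (via Lemma \ref{401}) is what guarantees that $F$, as a $T_K$-limit of positive-definite continuous functions, is itself continuous on $\mathscr{S}^{\prime}$. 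Minlos then produces a measure $m$ on $\mathscr{S}$ with $\widehat{m}=F_{l_0}$ (a suitable tail of $F$), and a Fubini swap converts $\int(1-F_{l_0})\,d\mu_n$ into $\int(1-\widehat{\mu_n})\,dm$, which the pointwise hypothesis controls. This yields the contradiction. Your write-up needs this (or an equivalent concrete mechanism) in place of the black-box appeal.
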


\begin{proof}
Let $\{ \widehat{\mu _n }(\varphi) \} \to \widehat{ \mu} (\varphi)$ for every $\varphi \in \mathscr{S}$. 
Assume that $\{ \widehat{\mu _n }(\cdot) \}$ is not equicontinuous 
at $0$, then there exist $\eta >0$, a sequence 
$\{ \varphi _k\} \subset \mathscr{S}$ and $\{ n_k \}$ such that 
$\varphi _k \to 0$ and 
\[
1 - Re(\widehat{\mu _{n_k}} (\varphi _k)) \geq \eta .
\]
Since $\{ \varphi _k \}$ converges to $0$, there exists a subsequence 
$\{ \varphi _{k_l} \}$ satisfying 
\[
\sum _{l =1} ^{\infty} |\varphi _ {k_l} |_p ^2 < \infty
\]
for all $p \in \mathbb{N}$. 
Now, set 
\[
F(x) = \exp \left( - \sum _{l=1}^{\infty} 
\langle \varphi _{k_l}, x \rangle ^2 \right)
\]
for $x \in \mathscr{S}^{\prime}$. By Proposition $\ref{401}$, $F$ is $T_K$-limit of positive definite continuous functions. Since $\mathscr{S}^{\prime}$ is a k-space, $F$ is positive definite continuous function. 

For any $\varepsilon >0$, by Lebesgue's dominated convergence theorem, 
there exists $l_0 \in \mathbb{N}$ such that 
\[
\int \left\{ 1- F_{l_0}(x) \right\} d\mu (x) 
< \varepsilon ,
\]
where 
\[
F_{l_0} (x) = \exp \left( - \sum _{l=l_0}^{\infty} 
\langle \varphi _{k_l}, x \rangle ^2  \right) .
\]
As $F_{l_0}$ is positive definite continuous function on $\mathscr{S}^{\prime}$, by Minlos' theorem, there exists a unique 
probability measure $m$ on $\mathscr{S}$ with 
$\widehat{m}(x) = F_{l_0}(x)$. 
By Fubini's theorem, 
\[
\int _{\mathscr{S}^{\prime}} \left\{ 1- F_{l_0}(x) \right\} 
d\mu _n (x) = 
\int _{\mathscr{S}} \left\{ 1- \widehat{\mu _n}(\varphi ) \right\} 
dm(\varphi) ,
\]
hence, for sufficiently large $n$, we obtain 
\[
\int \left\{ 1- F_{l_0}(x) \right\} 
d\mu _n (x) < 2\varepsilon .
\]
This shows, for $l\geq l_0$ and sufficiently large $n$, 
\[
\int \left\{ 1- \exp (- \langle \varphi _{k_l}, x \rangle ^2)  \right\}
d\mu _n(x) < 2\varepsilon , 
\]
which shows 
\[
1 - Re(\widehat{\mu _{n_{k_l}}}(\varphi_{k_l})) \leq 2M\varepsilon ,
\]
where 
\[
M = \sup _{u \in \mathbb{R}} \frac{1-\text{cos}u}{1-e^{-u^2}} .
\]
 Since $\varepsilon$ is arbitrary, this is a contradiction. Hence 
$\{ \widehat{\mu _n }(\cdot) \}$ is equicontinuous 
at $0$. By theorem $\ref{420}$, it follows that $\mu_n \to \mu$ weakly. 

The converse is obvious. 
\end{proof}

These two theorems lead us to an analogue of 
the invariance principle of Donsker (see e.g. \cite{KS}) for the White Noise measure. 

\begin{theorem}
Let $\{ \xi _j\} _{j= -\infty}^{\infty}$ be a sequence of independent 
identically distributed random variables with mean $0$ and variance $1$ 
defined on some probability space ($\Omega, \mathcal{F}, P$). 
Define a stochastic process $X$ by 
\[
X_t(\omega ) = \xi _{[t]} (\omega ) 
\]
and set 
\[
X_t^{(n)} (\omega ) = \sqrt{n}X_{nt}(\omega ). 
\]
Let $P_n$ be the probability measure on $\mathscr{S}^{\prime}$ 
induced by $X^{(n)}$. Then $\{P_n\}$ weakly converges to the white noise measure. 
\end{theorem}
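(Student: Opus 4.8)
The plan is to reduce the statement to Theorem~\ref{440}, which (via the k-space structure of $\mathscr{S}'$) says that weak convergence of probability measures on $\mathscr{S}'$ is equivalent to pointwise convergence of their characteristic functions. Thus it suffices to prove that for every $\varphi\in\mathscr{S}$,
\[
\widehat{P_n}(\varphi)\;\longrightarrow\;\exp\!\left(-\tfrac12\|\varphi\|_{L^2(\mathbb{R})}^2\right),
\]
the right-hand side being the characteristic function of the white noise measure (which exists on $\mathscr{S}'$ by the Bochner--Minlos theorem). Beforehand one must check that $X^{(n)}$ almost surely defines an element of $\mathscr{S}'$, so that $P_n$ is a genuine Borel probability measure on $\mathscr{S}'$: since $P(|\xi_j|>|j|^{3/4})\le |j|^{-3/2}$ by Chebyshev, the Borel--Cantelli lemma gives $|\xi_j(\omega)|=O(|j|^{3/4})$ almost surely, so the piecewise-constant function $t\mapsto X^{(n)}_t(\omega)$ has at most polynomial growth and hence pairs against Schwartz functions; measurability of $\omega\mapsto X^{(n)}(\omega)\in\mathscr{S}'$ holds because the Borel $\sigma$-algebra of $\mathscr{S}'$ coincides with the cylindrical one generated by the maps $x\mapsto\langle x,\varphi\rangle$.

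Next I would compute the characteristic function explicitly. For fixed $\varphi\in\mathscr{S}$ and fixed $n$,
\[
\langle X^{(n)},\varphi\rangle=\int_{\mathbb{R}}\sqrt{n}\,\xi_{[nt]}(\omega)\,\varphi(t)\,dt=\sum_{j=-\infty}^{\infty}\xi_j(\omega)\,c_{j,n},\qquad c_{j,n}=\sqrt{n}\int_{j/n}^{(j+1)/n}\varphi(t)\,dt,
\]
the series converging absolutely almost surely by the rapid decay of $\varphi$ together with the growth bound on the $\xi_j$. By independence of the $\xi_j$, $\widehat{P_n}(\varphi)=\prod_j\phi(c_{j,n})$, where $\phi$ is the common characteristic function of the $\xi_j$. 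The key observation is that $\{\xi_jc_{j,n}\}_{j\in\mathbb{Z}}$ is, for each $n$, a row of a triangular array of independent mean-zero random variables with $\sum_j\mathrm{Var}(\xi_jc_{j,n})=\sum_j c_{j,n}^2$; since $c_{j,n}=\varphi(j/n)/\sqrt{n}+O(n^{-3/2})$ uniformly in $j$ and $\varphi$ is Schwartz (so the tail sum over $|j|>Rn$ is controlled by the rapid decay of $\varphi$, while the bulk is a Riemann sum of mesh $1/n$), one obtains $\sum_j c_{j,n}^2\to\int_{\mathbb{R}}\varphi(t)^2\,dt=\|\varphi\|_{L^2}^2$.

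It then remains to verify the Lindeberg condition for this array. Because $|c_{j,n}|\le 2\|\varphi\|_\infty/\sqrt{n}$ for $n$ large, uniformly in $j$, the event $\{|\xi_jc_{j,n}|>\varepsilon\}$ forces $|\xi_j|$ to exceed a threshold tending to $\infty$, and using that the $\xi_j$ are identically distributed with finite second moment,
\[
\sum_j E\!\left[\xi_j^2c_{j,n}^2\,\mathbf{1}_{\{|\xi_jc_{j,n}|>\varepsilon\}}\right]\le\Big(\sum_j c_{j,n}^2\Big)\,E\!\left[\xi_0^2\,\mathbf{1}_{\{|\xi_0|>\varepsilon\sqrt{n}/(2\|\varphi\|_\infty)\}}\right]\longrightarrow 0
\]
by dominated convergence. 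The Lindeberg--Feller central limit theorem then gives $\langle X^{(n)},\varphi\rangle\Rightarrow N(0,\|\varphi\|_{L^2}^2)$, i.e. $\widehat{P_n}(\varphi)\to\exp(-\tfrac12\|\varphi\|_{L^2}^2)$ for every $\varphi\in\mathscr{S}$, and Theorem~\ref{440} finishes the proof. I expect no genuine obstacle in the argument: the conceptual heavy lifting — passing from convergence of one-dimensional marginals to weak convergence on the infinite-dimensional $\mathscr{S}'$ without any direct tightness estimate — is outsourced to Theorem~\ref{440}, and the remaining work is bookkeeping. The step requiring the most care is the tail estimate that justifies $\sum_j c_{j,n}^2\to\|\varphi\|_{L^2}^2$ for a Schwartz (not compactly supported) $\varphi$, but even this is routine given the rapid decay.
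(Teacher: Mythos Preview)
Your proof is correct and follows the same overall strategy as the paper: reduce to pointwise convergence of characteristic functions via Theorem~\ref{440}, express $\langle X^{(n)},\varphi\rangle$ as the independent sum $\sum_j\xi_j c_{j,n}$ with $c_{j,n}=\sqrt{n}\int_{j/n}^{(j+1)/n}\varphi$, and show this converges in law to $N(0,\|\varphi\|_{L^2}^2)$. The only difference is in how that last convergence is obtained. The paper writes $\widehat{P_n}(\varphi)=\prod_j C(\sqrt{n}a_j^{(n)})$, where $C$ is the common characteristic function of the $\xi_j$, Taylor-expands $C$ to second order, and computes the limit of the product directly; you instead package the same computation inside the Lindeberg--Feller CLT for triangular arrays. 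These are equivalent in content (Lindeberg--Feller is proved by exactly that Taylor expansion), so your route is a touch more conceptual while the paper's is more self-contained. You also add a preliminary check, via Borel--Cantelli, that $X^{(n)}$ almost surely defines an element of $\mathscr{S}'$, a point the paper leaves implicit.
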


\begin{proof}
Let us compute the characteristic function of $P_n$. For $\varphi \in \mathscr{S}$, 

\begin{equation*}
\begin{split}
\widehat{P_n}(\varphi ) &= 
E\left[ \exp ( i\langle X^{(n)}, \varphi \rangle ) \right ] \\
&= \prod _{j = - \infty} ^{\infty} 
E\left[ \exp \left( i \sqrt{n} \xi _j a_j^{(n)} \right) \right ] , 
\end{split}
\end{equation*}
where $a_j^{(n)}$ is set by 
\[
a_j^{(n)} = \int _{\frac{j}{n}} ^{\frac{j+1}{n}} \varphi (t)dt .
\]
Let $C$ denote the characteristic function of $\xi _j$, then $C$ is $C^2$ function with $C(0) =1$, $C^{\prime}(0) =0$, and $C^{\prime \prime} (0) = -1$. By Taylor's formula,  
\begin{equation*}
\begin{split}
\widehat{P_n}(\varphi ) &= 
\exp \left( \sum _{j=-\infty}^{\infty} \log \left( C\left( \sqrt{n} 
a_j^{(n)} \right) \right) \right) \\
&= \exp \left( \sum _{j=-\infty}^{\infty} \log \left( 
1+ \frac{1}{2}C^{\prime \prime} \left( \theta _j^{(n)} \sqrt{n} 
a_j^{(n)} \right) n {a_j^{(n)}}^2 \right) \right)
, 
\end{split}
\end{equation*}
where $0 < \theta _j^{(n)} < 1$. Since $\varphi \in \mathscr{S}$ and  $\sqrt{n}a_j^{(n)}$ uniformly converges to $0$, 
\[
\lim_{n \to \infty} \widehat{ P_n } (\varphi) = 
\exp \left( - \frac{1}{2} \int _{-\infty} ^{\infty} {\varphi (t)}^2 dt \right).
\] 
\end{proof}
\section*{Acknowledgements}
The authors wish to express their sincere gratitude to Prof. T. Hida
for his encouragement and interest in their research.
They are very grateful to Prof. I. Ojima for his helpful advices and comments.

\end{document}